\documentclass[a4paper,12pt]{amsart}
\usepackage[top=3cm,bottom=3cm,outer=3cm,inner=3cm,marginpar=3cm]{geometry}

\title[Diederich--Forn{\ae}ss and Steinness indices]{Semi-continuity of the Diederich-Forn{\ae}ss and Steinness indices}

\author{Young-Jun Choi}
\address{Department of Mathematics, Pusan National University, 2, Busandaehak-ro 63beon-gil, Geumjeong-gu, Busan 46241, Republic of Korea}
\email{youngjun.choi@pusan.ac.kr}
\thanks{}

\author{Jihun Yum}
\address{Department of Mathematics, Pusan National University, 2, Busandaehak-ro 63beon-gil, Geumjeong-gu, Busan 46241, Republic of Korea}
\email{jihun0224@pusan.ac.kr}
\thanks{}

\keywords{Diederich-Forn{\ae}ss index, Steinness index, deformation of pseudoconex domains}
\subjclass[2010]{32T27, 32U10, 32G05}
\date{\today}


\usepackage{amsmath,amsthm,amssymb,latexsym}
\usepackage{mathrsfs}
\usepackage[abbrev]{amsrefs}

\usepackage{hyperref}
\usepackage[autostyle]{csquotes}
\usepackage{xcolor}


\newtheorem{thm}{Theorem}[section]
\newtheorem{lem}[thm]{Lemma}

\theoremstyle{definition}
\newtheorem{defn}[thm]{Definition}

\newtheorem{rmk}[thm]{Remark}

\numberwithin{equation}{section}


\newcommand\D{\mathbb{D}}

\newcommand{\ol}{\overline }


\def\RR{\mathbb{R}} 
\def\CC{\mathbb{C}} 
 
 
 
\def\O{\Omega} 
\def\Levi{\mathscr{L}} 
\def\Lie{\mathcal{L}}  
\def\Null{\mathcal{N}}    


\begin{document}

\begin{abstract}
In this paper, we prove the semi-continuity theorem of Diederich-Forn{\ae}ss index and Steinness index under a smooth deformation of pseudoconvex domains in Stein manifolds.
\end{abstract}

\maketitle

\section{\bf Introduction}

Let $\O$ be a relatively compact, Levi pseudoconvex domain in a complex manifold $X$ with $C^{\infty}$-smooth boundary. Let $\rho : X \rightarrow \RR$ be a smooth defining function of $\O$, i.e., $\O = \{z \in X : \rho(z) < 0 \}$ and $d\rho \neq 0$ on $\partial \O$. The Diederich-Forn{\ae}ss index $DF(\O)$ and the Steinness index $S(\O)$ of $\O$ are defined by  
\begin{align*}
DF(\O) &:= \sup_{\rho} \left\{ 0< \gamma < 1 : -(-\rho)^{\gamma} \text{ is strictly plurisubharmonic on } \O \cap W \right\}, \\
S(\O) &:=  \inf_{\rho} \left\{ \gamma > 1 : \rho^{\gamma} \text{ is strictly plurisubharmonic on } \overline{\O}^{\complement} \cap W \right\},	
\end{align*}
where the supremum and infimum are taken over all smooth defining function $\rho$, and $W$ is some neighborhood of $\partial\O$ that may depend on $\rho$ and $\gamma$. If such $\rho$ and $\gamma$ do not exist, we define $DF(\O)=0$ and $S(\O)=\infty$.
When the ambient space $X$ is a Stein manifold, $DF(\O) > 0$ implies the existence of a bounded strictly plurisubharmonic function on $\O$, i.e., $\O$ is hyperconvex, and $S(\O) < \infty$ implies that $\O$ admits a Stein neighborhood basis. 
In 1977, Diederich and Forn{\ae}ss (\cite{diederich-fornaess}) proved the positivity of $DF(\O)$ if the boundary is of $C^2$-smoothness.
The second named author (\cite{Yum1}) provided a necessary and sufficient condition for $S(\O) < \infty$.
\medskip

In this paper, we shall study the semi-continuity of the both indices of a relatively compact pseudoconvex domain with smooth boundary under a smooth deformation.
More precisely, a smooth deformation of a relatively compact domain $\Omega_0$ in a complex manifold $X_0$ is given as follows:

A surjective holomorphic submersion $\pi : (X,\Omega) \rightarrow\D$ from a complex manifold $X$ with a relatively compact domain $\Omega$ in $X$ to the unit disc $\D$ is called a \emph{smooth deformation of $\Omega_0$ in $X_0$ over $\D$} if

\begin{itemize}
\item $X_0 = \pi^{-1}(0)$ and $\Omega_0 = \Omega \cap X_0$.
\item $\Omega$ admits a defining function $\delta$ such that $\delta\vert_{X_t}$ is a defining function of $\Omega_t:=\Omega \cap \pi^{-1}(t)$ in $X_t:=\pi^{-1}(t)$ for $t\in\D$.
\end{itemize}
Note that the above conditions guarantee that every fiber $\Omega_t$ is diffeomorphic to $\Omega_0$. (cf, see \cite{Saeki}.)
\medskip

The main theorem of this paper is as follows:

\begin{thm} \label{thm:main}
	If every fiber $X_t$ is a Stein manifold, then 
	\[ 
		\liminf_{t \rightarrow 0} DF(\O_t) \ge DF(\O_0) \quad \text{ and } \quad 
		\limsup_{t \rightarrow 0} S(\O_t) \le S(\O_0).
	\]
In other words, $DF(\O_t)$ is lower semi-continuous and $S(\O_t)$ is upper semi-continuous at $t=0$, respectively.
\end{thm}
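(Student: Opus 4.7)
I will sketch the argument for the Diederich--Forn\ae ss index; the Steinness index is parallel. Given $\gamma' < DF(\Omega_0)$, pick an intermediate $\gamma' < \gamma < DF(\Omega_0)$ and a defining function $\rho_0$ of $\Omega_0$ such that $-(-\rho_0)^\gamma$ is strictly plurisubharmonic on $\Omega_0 \cap W$ for some neighborhood $W$ of $\partial \Omega_0$ in $X_0$. The plan is to globalize $\rho_0$ to a defining function of $\Omega$ whose restrictions $\rho_t := \rho|_{X_t}$ are defining functions of $\Omega_t$, and then to show that $-(-\rho_t)^{\gamma'}$ remains strictly plurisubharmonic on $\Omega_t \cap W_t$ for $|t|$ small.

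\textbf{Globalization of the defining function.} Both $\rho_0$ and $\delta|_{X_0}$ are defining functions of $\Omega_0$, so $\rho_0 = e^{\psi_0}\delta|_{X_0}$ on a neighborhood of $\partial\Omega_0$ in $X_0$ for some smooth $\psi_0$. Using the submersion $\pi$ to trivialize $X$ smoothly near $\partial\Omega_0$, extend $\psi_0$ to a smooth function $\psi$ on a neighborhood $V$ of $\partial\Omega_0$ in $X$, and set $\rho := e^\psi \delta$. Then $\rho|_{X_0}=\rho_0$, and the hypothesis on $\delta$ ensures that $\rho_t := \rho|_{V \cap X_t}$ is a defining function of $\Omega_t$ in a neighborhood of $\partial\Omega_t$, for all $t$ in a neighborhood of $0$.

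\textbf{Perturbation with slack.} The strict plurisubharmonicity of $-(-\rho_t)^{\gamma'}$ is equivalent, after clearing the positive factor $\gamma'(-\rho_t)^{\gamma'-2}$, to the pointwise positivity
\[
F_t(z,\xi) := (1-\gamma')\,|\langle \partial\rho_t(z),\xi\rangle|^2 + (-\rho_t(z))\,L_{\rho_t}(z;\xi) > 0
\]
for all $z\in \Omega_t\cap W_t$ and all nonzero $\xi\in T^{1,0}_z X_t$, where $L_{\rho_t}$ denotes the complex Hessian of $\rho_t$. At $t=0$ we decompose $F_0 = G_0 + (\gamma-\gamma')|\langle \partial\rho_0,\xi\rangle|^2$, where $G_0$ is the strict plurisubharmonicity expression for $-(-\rho_0)^\gamma$ and hence is positive on $\Omega_0\cap W$. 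Since $\rho$ is smooth on $X$ and the fibers $X_t$ vary smoothly, $|F_t-F_0|=O(|t|)$ uniformly on any compact piece of $V$ and unit $\xi$, so the hope is to absorb the perturbation into the slack $(\gamma-\gamma')|\langle \partial\rho_0,\xi\rangle|^2$ combined with the strict positivity of $G_0$.

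\textbf{Main obstacle.} The hard point is that $F_0$ degenerates near $\partial\Omega_0$ in the complex-tangential directions: for $p\in\partial\Omega_0$ and $\xi\in T^{1,0}_p\partial\Omega_0$ lying in the null space of the Levi form, both $|\langle\partial\rho_0,\xi\rangle|^2$ and $(-\rho_0)L_{\rho_0}(\xi)$ vanish at $p$, so the interior strict positivity of $F_0$ decays to $0$ at the boundary and a uniform $O(|t|)$ bound cannot be absorbed directly in a full neighborhood of $\partial\Omega_0$. Overcoming this will require a quantitative lower bound on $F_0$ in a tubular neighborhood of $\partial\Omega_0$ that controls the rate of decay in tangential directions, together with the compactness of $\partial\Omega_0$; a small plurisubharmonic correction $\rho_t \mapsto \rho_t\, e^{-\epsilon \phi_t}$, where $\phi_t$ is a strictly plurisubharmonic function on the Stein fiber $X_t$, is the natural device to restore uniform positivity and is exactly where the Stein hypothesis on the fibers enters. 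The Steinness index follows the same template, with $\rho_t^{\gamma'}$ on $\overline{\Omega_t}^\complement\cap W_t$ and the analogous positivity condition $(\gamma'-1)|\langle\partial\rho_t,\xi\rangle|^2+\rho_t\, L_{\rho_t}(\xi)>0$.
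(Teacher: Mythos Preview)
Your globalization of the defining function matches the paper's, but from that point on the arguments diverge, and your sketch has a genuine gap precisely where you flag the ``Main obstacle.'' You correctly note that $F_0(z,\xi)$ degenerates to zero as $z\to p\in\partial\Omega_0$ with $\xi\in\mathcal{N}_p$, so a crude $O(|t|)$ bound on $F_t-F_0$ cannot be absorbed. But the fix you propose, replacing $\rho_t$ by $\rho_t e^{-\epsilon\phi_t}$ with $\phi_t$ strictly plurisubharmonic on the Stein fiber, does not close this gap: after dividing out the factor $\gamma'(-\rho_t)^{\gamma'-2}$, the new positive contribution is of order $\epsilon(-\rho_t)^2$, which vanishes to the \emph{same} (second) order at $\partial\Omega_t$ as $F_0$ itself does near null-Levi points, and so cannot dominate a perturbation error whose leading part is of order $|t|\cdot\mathrm{dist}(z,\partial\Omega_t)$ there. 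What is actually needed is a sharp second-order lower bound for $F_0$ in null directions \emph{and} a proof that this bound is stable in $t$; extracting that boundary information is exactly what the D'Angelo $(1,0)$-form does, so your direct route ends up needing the same machinery it was trying to avoid. Note also that this is not how the paper uses Steinness of the fibers.

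The paper sidesteps the interior analysis entirely by invoking Theorem~\ref{thm:DF,S formulas}: for each $\gamma<DF(\Omega_0)$ one has a defining function $\rho$ with
\[
\overline{\partial}\omega_{\rho}-\frac{\gamma}{1-\gamma}\,\omega_{\rho}\wedge\overline{\omega}_{\rho}>0\quad\text{on }\mathcal{N}\subset T^{1,0}(\partial\Omega_0),
\]
a strict inequality living purely on the boundary. After extending $\rho$ to the family (as you do), the remaining issue is that $\mathcal{N}_q$ is only upper semi-continuous in $q$. The paper handles this with Lemma~\ref{lem:null-space}: near each $p\in\partial\Omega_0$ there are smooth tangential $(1,0)$-fields $L_1,\dots,L_m$ with $\mathcal{N}_p=\langle L_1(p),\dots,L_m(p)\rangle$ and $\mathcal{N}_q\subset\langle L_1(q),\dots,L_m(q)\rangle$ for all nearby $q\in\partial\Omega_t$. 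Continuity then propagates the displayed inequality to this $m$-plane, hence to every $\mathcal{N}_q$, and compactness of $\partial\Omega_0$ finishes. The Stein hypothesis enters only to ensure that Theorem~\ref{thm:DF,S formulas} applies on each fiber.
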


We would like to emphasize that the Steinness of each fiber $X_t$ is a superfluous condition.
In fact, the condition in Theorem \ref{thm:DF,S formulas}, which is our main tool, is sufficient. 
That is,  if each $\O_t$ admits a defining function $\rho_t$ such that either 
$\ol{\partial} \omega_{\rho_t} > 0$ on $\Null$ or $\ol{\partial} \omega_{\rho_t} < 0$ on $\Null$, then we have the same conclusion.
Here, $\omega_{\rho_t}$ is a D'Angelo $(1,0)$-form and $\Null$ is the kernel of the Levi form (see Section \ref{sec:Preliminaries}).
We refer readers to Corollary 5.4 and Remark 5.6 in \cite{Adachi-Yum} for the details.

\bigskip

\subsection*{\bf Acknowledgment}
The authors would like to thank A. Seo for suggesting the problem and M. Adachi for a useful comment on Lemma \ref{lem:null-space}.
This work was supported by the National Research Foundation
(NRF) of Korea grant funded by the Korea government (No. 2018R1C1B3005963).


\section{\bf Preliminaries} \label{sec:Preliminaries}

In this section, we introduce the D'Angelo $1$-form and the characterizations of Diederich-Forn{\ae}ss and Steinness indices by the D'Angelo $1$-form due to Adachi and the second named author (\cite{Adachi-Yum}).

First, we recall the definition of D'Angelo 1-form, which was introduced by D'Angelo (\cite{D'Angelo0}, \cite{D'Angelo}), and developed by Boas and Straube (\cite{Boas-Straube}). 
Let $\O$ be a relatively compact, Levi pseudoconvex domain in a complex manifold $X$ with $C^{\infty}$-smooth boundary. Let $\rho$ be a smooth defining function of $\O$.
Denote the kernel of the Levi form by $\mathcal{N} = \bigcup_{p \in M} \mathcal{N}_p \subset T^{1,0}(\partial \O)$ where
\[
\mathcal{N}_p := \{ L_p \in T^{1,0}_p(\partial \O) \mid \Levi_{\rho}(L_p, L'_p) = 0 \quad \forall L_p' \in T^{1,0}_p(\partial \O) \}
\]
for each $p \in \partial \O$, where $\Levi_{\rho}$ is the Levi-form of $\rho$. 
Note that the kernel $\Null$ does not depend on a defining function $\rho$, and $\mathcal{N}_p = \{ L_p \in T^{1,0}_p(\partial \O) \mid \Levi_{\rho}(L_p, L_p) = 0 \}$
when $\O$ is a Levi pseudoconvex domain.
Define 
\[ 
\eta_{\rho} := \frac{1}{2}\left( \partial \rho - \overline{\partial} \rho \right),
\]
which is a purely imaginary, non-vanishing 1-form on $\partial \O$ that annihilates $T^{1,0}(\partial \O) \oplus T^{0,1}(\partial \O)$.
Let $T_{\rho} \in \Gamma(\CC \otimes T(\partial\O))$ be a purely imaginary, non-vanishing, smooth vector field on $\partial \O$ such that $\eta_{\rho}(T_{\rho}) = 1$. 
Then $T_{\rho}$ yields a decomposition
\[
\CC \otimes T(\partial\O) = T^{1,0}(\partial\O) \oplus T^{0,1}(\partial\O) \oplus \CC T_{\rho}.
\]
We call such $T_{\rho}$ a \emph{transversal vector field} normalized with respect to $\eta_{\rho}$.
Denote $\eta_{\rho}$ and $T_{\rho}$ by $\eta$ and $T$, respectively, if there is no ambiguity.

\begin{defn} \label{def D'Angelo 1-form}
	A {\it D'Angelo 1-form} $\alpha_{\rho}$ of $\rho$ on $\partial \O$ is defined by 
	$$ \alpha_{\rho} := - \Lie_{T_{\rho}} \eta_{\rho} ,$$
	where $\Lie_{T_{\rho}}$ is the Lie derivative in the direction of $T_\rho$. 
	A {\it D'Angelo $(1,0)$-form} $\omega_{\rho} := \pi_{1,0} \alpha_{\rho}$ is the projection of $\alpha_{\rho}$ onto its $(1,0)$-component.
\end{defn}

\begin{rmk}
	In \cite{Adachi-Yum}, they defined the D'Angelo 1-form on a compact abstract CR manifold $M$ of hypersurface type without using a defining function. However, this definition is equivalent to Definition \ref{def D'Angelo 1-form} when $M$ bounds a relatively compact domain in a complex manifold. In this paper, we use Definition \ref{def D'Angelo 1-form} because we deal with only domains.
\end{rmk}

Note that although, for a defining function $\rho$, a transversal vector field $T_{\rho}$ normalized with respect to $\eta_{\rho}$ is not unique, $\omega_{\rho}$ and $\ol{\partial} \omega_{\rho}$ are well-defined on $\Null$, that is, they are independent of the choice of $T_{\rho}$ (see Lemma 2.5 and 2.6 in \cite{Adachi-Yum}).
We regard $\ol{\partial} \omega_{\rho}$ and $\omega_{\rho} \wedge \ol{\omega}_{\rho}$ as quadratic forms on $\Null$, i.e., $\ol{\partial} \omega_{\rho} > 0$ on $\Null$ means $\ol{\partial} \omega_{\rho}(L, \ol{L}) > 0$ for all $L \in \Null$.
The main tool we will use in Section \ref{sec:Semi-continuity of two indices} is the following.

\begin{thm}[Adachi, Yum \cite{Adachi-Yum}] \label{thm:DF,S formulas}
	Suppose that there exists a defining function $\rho_1$ of $\O$ such that $\ol{\partial} \omega_{\rho_1} > 0$ on $\Null$ or $\ol{\partial} \omega_{\rho_1} < 0$ on $\Null$. Then 
	\begin{align*}
	DF(\O) &= \sup_{\rho} \left\{ 0 < \gamma < 1 : \overline{\partial} \omega_{\rho}  - \frac{\gamma}{1 - \gamma}(\omega_{\rho} \wedge \overline{\omega}_{\rho}) > 0 \quad \text{ on } \Null  \right\}, \\
	S(\O) &= \inf_{\rho} \left\{ \gamma > 1 : -\overline{\partial} \omega_{\rho} - \frac{\gamma}{\gamma - 1}(\omega_{\rho} \wedge \overline{\omega}_{\rho}) > 0 \quad \text{ on } \Null  \right\}.
	\end{align*}
	If the supremum or infimum does not exist, then $DF(\O)=0$ or $S(\O)=\infty$, respectively. 
\end{thm}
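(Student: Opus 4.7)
The plan is to reduce strict plurisubharmonicity of the test functions $-(-\rho)^\gamma$ on $\Omega$ and $\rho^\gamma$ on $\overline{\Omega}^\complement$ to pointwise inequalities at $\partial\Omega$ involving the D'Angelo $(1,0)$-form, and then to sweep over all admissible defining functions. Direct computation gives
\begin{align*}
i\partial\overline{\partial}\bigl(-(-\rho)^\gamma\bigr) &= \gamma(-\rho)^{\gamma-1}\left[i\partial\overline{\partial}\rho + \frac{1-\gamma}{-\rho}\, i\partial\rho\wedge\overline{\partial}\rho\right], \\
i\partial\overline{\partial}(\rho^\gamma) &= \gamma\rho^{\gamma-1}\left[i\partial\overline{\partial}\rho + \frac{\gamma-1}{\rho}\, i\partial\rho\wedge\overline{\partial}\rho\right].
\end{align*}
After removing the positive prefactors, the sign of the bracket near $\partial\Omega$ is automatic in the complex-normal direction thanks to the blow-up of $1/|\rho|$, and it is automatic in tangential directions outside $\Null$ by strict positivity of the Levi form there (on a sufficiently small neighborhood). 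The real content thus reduces to an inequality at $\partial\Omega$ along directions in $\Null$.

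Next I would sweep over all smooth defining functions by writing $\rho = e^h\tilde\rho$ with $\tilde\rho$ fixed, and asymptotically expanding the bracket as $\rho\to 0$ along $L\in\Null$. The change-of-defining-function rules for $\eta_\rho$ and for the transversal vector field $T_\rho$ imply that replacing $\tilde\rho$ by $e^h\tilde\rho$ modifies $\omega_\rho$ by an exact $(1,0)$-piece depending on $\overline{\partial}h|_{\partial\Omega}$, and this change matches term-for-term the way $h$ enters the bracket. Collecting terms, the boundary inequality on $\Null$ for the DF side collapses to
\[
\overline{\partial}\omega_\rho - \frac{\gamma}{1-\gamma}\,\omega_\rho\wedge\overline{\omega}_\rho > 0 \quad \text{on } \Null,
\]
and the analogous expansion for $\rho^\gamma$ on $\overline{\Omega}^\complement$ yields
\[
-\overline{\partial}\omega_\rho - \frac{\gamma}{\gamma-1}\,\omega_\rho\wedge\overline{\omega}_\rho > 0 \quad \text{on } \Null.
\]
This shows that any $\gamma$ realized in the DF (resp.\ $S$) definition by some $\rho$ satisfies the corresponding D'Angelo inequality with that same $\rho$.

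For the converse I would take a defining function at which the D'Angelo inequality is strict, and construct a nearby defining function $\rho' = e^h\rho$ with $h$ of the form $\lambda\,\mathrm{dist}(\cdot,\partial\Omega)$ (or similar), checking directly that the bracket above is positive on a one-sided neighborhood of $\partial\Omega$; this is where the sign hypothesis $\overline{\partial}\omega_{\rho_1} > 0$ or $< 0$ on $\Null$ enters. Under this hypothesis the dominant tangential obstruction has a definite, nondegenerate sign, so the boundary inequality propagates into a full neighborhood, and $\omega_\rho\wedge\overline{\omega}_\rho$ is automatically dominated by $\overline{\partial}\omega_\rho$ on $\Null$, making the inequalities genuinely attainable. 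Without the hypothesis the D'Angelo inequality would be only a necessary condition and the sup/inf formulas could fail.

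I expect the main obstacle to be the second step: the explicit asymptotic expansion of $i\partial\overline{\partial}\bigl(-(-e^h\tilde\rho)^\gamma\bigr)$ along $L\in\Null$, tracking how the $h$-dependent terms combine with the singular factor $1/(-\rho)$ to reproduce, in the limit, the combination $\overline{\partial}\omega_\rho - \tfrac{\gamma}{1-\gamma}\omega_\rho\wedge\overline{\omega}_\rho$. Carrying it out requires a frame adapted simultaneously to $\Null$, to the strictly pseudoconvex directions of the Levi form, and to the complex normal, plus careful bookkeeping of mixed-normal cross-terms that vanish on $\Null$ pointwise but whose derivatives feed $\overline{\partial}\omega_\rho$. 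Once this identity is in hand, the rest is organizational: taking the supremum/infimum over $h$ gives the stated characterizations, and the final clause on $DF(\Omega)=0$ or $S(\Omega)=\infty$ follows by contradiction.
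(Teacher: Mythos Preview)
This theorem is not proved in the present paper. It is quoted from the external reference \cite{Adachi-Yum} (Adachi--Yum) and used as a black box in Section~\ref{sec:Semi-continuity of two indices}; the paper contains no argument for it beyond the citation. Consequently there is no ``paper's own proof'' to compare your proposal against.

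That said, your outline is broadly aligned with how such characterizations are typically established: compute $i\partial\overline\partial$ of the power test functions, strip off the positive prefactor, observe that the normal and strictly pseudoconvex tangential directions are handled by the $1/|\rho|$ blow-up and the Levi form respectively, and reduce to an inequality on $\Null$ that one then identifies with the D'Angelo combination via the transformation law for $\omega_\rho$ under $\rho\mapsto e^h\rho$. Two points where your sketch is genuinely incomplete: first, the ``converse'' step---passing from the strict boundary inequality on $\Null$ back to strict plurisubharmonicity on a one-sided collar---requires more than a perturbation $h=\lambda\,\mathrm{dist}(\cdot,\partial\Omega)$; one must absorb the cross-terms between $\Null$, the strictly pseudoconvex directions, and the normal, and this is where the compactness of $\partial\Omega$ and a careful choice of frame are used. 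Second, your explanation of where the sign hypothesis $\overline\partial\omega_{\rho_1}>0$ (or $<0$) enters is vague; in the Adachi--Yum argument it is what guarantees that the supremum over $\rho$ of the boundary quantity actually agrees with $DF(\Omega)$ rather than merely bounding it, by ensuring the relevant set of admissible $\gamma$ is nonempty and that the optimization over $h$ can be carried out. If you want to reconstruct the proof, consult \cite{Adachi-Yum} directly, since this paper does not supply the details.
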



\section{\bf Semi-continuity of two indices} \label{sec:Semi-continuity of two indices}

In this section, we prove the main theorem. 
Let $\pi : (X,\Omega) \rightarrow \D$ be a smooth deformation of $\O_0$ in $X_0$ over $\D$.
Let $\dim_{\CC}X = n+1$ and $\dim_{\CC}X_t = n$.
Let $\delta$ be a defining function of $\O \subset X$ such that $\delta_t := \delta|_{X_t}$ is also a defining function of $\O_t \subset X_t$ for each $t \in \D$, and $M := \partial \O$.
For a point $p \in \partial\O_t \subset M$, denote by 
\[
\Null_p := \{ L_p \in T^{1,0}_p(\partial \O_t) \mid \Levi_{\delta_t}(L_p, L'_p) = 0 \quad \forall L_p' \in T^{1,0}_p(\partial \O_t) \}.
\]

\begin{lem} \label{lem:null-space} 
	For $p \in \partial \O_0 \subset M$, suppose that $dim_{\CC} (\Null_p) = m$ $(1 \le m \le n-1)$. Then there exist a neighborhood $U_p$ of $p$ in $M$ and $m$ linearly independent smooth $(1,0)$ vector fields $L_1, \cdots, L_m$ on $U_p$ satisfying the following conditions:
	\begin{itemize}
		\item $\Null_p = \left< L_1(p), \cdots, L_m(p) \right> $, 
		\item For each $q \in U_p$, $\Null_q \subset \left< L_1(q), \cdots, L_m(q) \right> $.
	\end{itemize}
	
\end{lem}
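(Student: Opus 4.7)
The plan is a block-form Schur-complement argument: I would construct a smooth local frame of $T^{1,0}(\pa\O_{\pi(q)})$ over $M$ near $p$, read off the Levi matrix in a block decomposition adapted to $\Null_p$, and explicitly invert the nondegenerate block to produce a corrected frame whose first $m$ vectors span a smoothly varying subspace containing every $\Null_q$.

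First, pick local holomorphic coordinates $(z_1,\ldots,z_n,t)$ on a neighbourhood $V$ of $p$ in $X$ in which $\pi(z,t)=t$, so that $X_t\cap V$ is identified with an open subset of $\CC^n$ and $\pa_{z_1},\ldots,\pa_{z_n}$ is a holomorphic frame of $T^{1,0}X_t$. Since $\delta|_{X_t}$ is a defining function of $\O_t$, some coordinate derivative, say $\pa_{z_n}\delta$, is nonzero at $p$. Then
\[
\tilde e_i(q) := \pa_{z_n}\delta(q)\,\pa_{z_i} - \pa_{z_i}\delta(q)\,\pa_{z_n},\qquad i=1,\ldots,n-1,
\]
are smooth $(1,0)$-fields on $V$; each has no $\pa_t$-component and satisfies $\pa\delta(\tilde e_i)=0$, hence is tangent to $\pa\O_{\pi(q)}$ at every $q$. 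Restricted to $U_p := V\cap M$ they form a smooth frame of $\bigcup_q T^{1,0}_q(\pa\O_{\pi(q)})$. A constant $GL_{n-1}(\CC)$-change of basis yields a new frame $e_1,\ldots,e_{n-1}$ with $\Null_p=\langle e_1(p),\ldots,e_m(p)\rangle$ and $e_{m+1}(p),\ldots,e_{n-1}(p)$ spanning a chosen complement of $\Null_p$.

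Let $A(q)$ be the Hermitian matrix of $\Levi_{\delta_{\pi(q)}}$ in this frame, partitioned into blocks $A_{11},A_{12},A_{21},A_{22}$ with $A_{11}$ of size $m\times m$. Levi pseudoconvexity gives $A(q)\ge 0$ and $\ker A(q)=\Null_q$; by the choice of frame one has $A_{11}(p)=A_{12}(p)=A_{21}(p)=0$ and $A_{22}(p)>0$, so continuity makes $A_{22}(q)$ invertible on a smaller $U_p$. A direct Schur-complement calculation shows that any $v=(v_1,v_2)\in\ker A(q)$ must satisfy $v_2=-A_{22}(q)^{-1}A_{21}(q)v_1$, which motivates setting
\[
L_i(q) := e_i(q) - \sum_{j=m+1}^{n-1}\bigl(A_{22}(q)^{-1}A_{21}(q)\bigr)_{j-m,\,i}\,e_j(q), \qquad i=1,\ldots,m.
\]
The $L_i$ are smooth $(1,0)$-fields on $U_p$; they are pointwise linearly independent because the coefficients of $e_1,\ldots,e_m$ in $L_i$ form the $m\times m$ identity; at $p$ they reduce to $e_i(p)$ (as $A_{21}(p)=0$) and so span $\Null_p$; and the Schur-complement identity above is precisely the containment $\Null_q\subset\langle L_1(q),\ldots,L_m(q)\rangle$.

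The only delicate point is the construction of the smooth frame of $T^{1,0}(\pa\O_{\pi(q)})$ across the fibres, where the holomorphic submersion structure of $\pi$ together with the fact that $\delta$ restricts to a defining function on every $X_t$ is essential. After the frame is fixed, the rest is linear algebra combined with lower semi-continuity of the rank of the continuous positive semidefinite family $A$, which is presumably the content of Adachi's remark.
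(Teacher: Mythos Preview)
Your argument is correct and is essentially the same Schur-complement block-diagonalization as the paper's own proof: both build a local frame of $\bigcup_q T^{1,0}_q(\partial\Omega_{\pi(q)})$ adapted to $\Null_p$, write the Levi matrix in $m\times(n-1-m)$ block form, and use invertibility of the lower block near $p$ to correct the first $m$ frame vectors so that they span a subspace containing each $\Null_q$. The only cosmetic differences are that you construct the frame explicitly via $\tilde e_i=\partial_{z_n}\delta\,\partial_{z_i}-\partial_{z_i}\delta\,\partial_{z_n}$ whereas the paper just cites that the fiberwise holomorphic tangent spaces form a subbundle, and the paper packages the change of basis as a single matrix $\Psi$ acting on the whole frame rather than writing out only the first $m$ corrected vectors; also, your appeal to pseudoconvexity for $\ker A(q)=\Null_q$ is unnecessary since $\Null_q$ is defined as the bilinear kernel.
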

\begin{proof}
	Choose a basis $\{ L_{1,p}, \cdots, L_{n-1,p} \}$  for $T^{1,0}_p(\partial \O_0)$ such that $\{ L_{1,p}, \cdots, L_{m,p} \}$ forms a basis for $\Null_p$. 
	Then, for each $j=1, \cdots, n-1$, we extend $L_{j,p}$ smoothly to a $(1,0)$ vector field $X_j$ on a neighborhood $U_p$ of $p$ in $M$ such that 
	$X_j(p) = L_{j,p}$ and $\{ X_1(q), \cdots, X_{n-1}(q) \}$ forms a basis for $T^{1,0}_q(\partial \O_t)$ for $q \in U_p$ with $\pi(q)=t$.
	This is possible because $\bigcup_{q \in \partial \O} T^{1,0}_q(\partial \O_t)$ is a vector subbundle of $T^{1,0}(\partial \O)$.
	Let $M(q)$ be the matrix representation of the Levi-form with respect to a basis $\{ X_j(q) \}^{n-1}_{j=1}$ at $q \in U_p$, i.e., 
	$M(q) = [ M_{i\overline{j}}(q) ]_{(n-1)\times(n-1)} := \left[ \Levi_{\delta_t}(X_i(q), X_j(q)) \right] $. Then $M(q)$ is a hermitian matrix and let  \\
	\[
	M(q) = 
	\begin{bmatrix}
		A(q) & B(q) \\
		B(q)^* & C(q) 
	\end{bmatrix}
	\]
	with blocks of size $m$ and $(n-m-1)$, where $B(q)^*$ is the conjugate transpose of $B(q)$.
	Also, by the construction, $A(p) = B(p) = 0$ and $C(p)$ is invertible.
	By the continuity, $C(q)$ is still invertible on $U_p$ (by shrinking $U_p$ if necessary).
	Let 
	$$\Psi(q) :=
	\begin{bmatrix}
		I_{m \times m} & 0 \\
		-C(q)^{-1} B(q)^* & C(q)^{-1} 
	\end{bmatrix}
	,$$
	where $I_{m \times m}$ is the identity matrix.
	Define vector fields $L_j(q)$ $(1 \le j \le n-1)$ on $U_p$ by
	\[
		\left[ L_1(q) \cdots L_{n-1}(q) \right] := \left[ X_1(q) \cdots X_{n-1}(q) \right] \times \overline{\Psi}(q),
	\]
	where $\times$ is the matrix multiplication.
	Then $\Null_p = \left< L_1(p), \cdots, L_m(p) \right> $ because $L_j(p) = X_j(p) = L_{j,p}$ for $j = 1, \cdots, m$, and
	the matrix representation of the Levi-form with respect to a basis $\{ L_j(q) \}^{n-1}_{j=1}$ is
	\begin{equation*} 
		\Psi(q)^* \times M(q) \times \Psi(q) = 
		\begin{bmatrix}
			A(q) - B(q)C(q)^{-1}B(q)^* & 0 \\
			0 &  C(q)^{-1}
		\end{bmatrix}
		.
	\end{equation*}
	Therefore, since $C(q)^{-1}$ is invertible, we conclude that $\Null_q \subset \left< L_1(q), \cdots, L_m(q) \right> $.
\end{proof}

\begin{proof}[Proof of Theorem \ref{thm:main}]
	Since each fiber $X_t$ is a Stein manifold, one may apply Theorem \ref{thm:DF,S formulas} for each $\O_t$ (see Corollary 5.4 in \cite{Adachi-Yum}).
	By Theorem \ref{thm:DF,S formulas}, for each $0 < \gamma < DF(\O_0)$, there exists a defining function $\rho : X_0 \rightarrow \RR$ of $\O_0 \subset X_0$ such that 
	\begin{equation} \label{inq:3.1}
		\overline{\partial} \omega_{\rho}  - \frac{\gamma}{1 - \gamma}(\omega_{\rho} \wedge \overline{\omega}_{\rho}) > 0 \quad \text{ on } \Null,
	\end{equation}
	where $\Null := \bigcup_{p \in \partial \O_0} \mathcal{N}_p \subset T^{1,0}(\partial \O_0)$.
	We will extend $\rho$ smoothly to $\widetilde{\rho} : \pi^{-1}(U_0)  \rightarrow \RR$ such that $\widetilde{\rho}_t := \widetilde{\rho}|_{X_t}$ is a defining function of $\O_t$ for each $t \in U_0$ and $\widetilde{\rho}_0 = \rho$, where $U_0$ is a neighborhood of $0 \in \D$. Since $\delta_0$ is also a defining function of $\O_0 \subset X_0$, there exists a smooth function $\psi \in C^{\infty}(X_0)$ such that $\rho = e^{\psi} \delta_0$. 
	Then since $X_0$ is a closed submanifold of $X$, we may extend $\psi$ to $\widetilde{\psi} \in C^{\infty}(\pi^{-1}(U_0))$ (by shrinking $U_0$ if necessary), and $\widetilde{\rho} := e^{\widetilde{\psi}} \delta$ is the desired function. We denote $\widetilde{\rho}$ by $\rho$ again, and $\omega_{\rho_t}$ by $\omega_{t}$.
	
	Now for $p \in \partial \O_0$, suppose that $dim_{\CC} (\Null_p) = m >0 $. Then by Lemma \ref{lem:null-space},
	there exist a neighborhood $U_p$ of $p$ in $M$ and $m$ linearly independent smooth $(1,0)$ vector fields $L_1, \cdots, L_m$ on $U_p$ satisfying the following conditions:
	\begin{itemize}
		\item $\Null_p = \left< L_1(p), \cdots, L_m(p) \right> $, 
		\item For each $q \in U_p$, $\Null_q \subset \left< L_1(q), \cdots, L_m(q) \right> $.
	\end{itemize}
	Then, by the continuity, the inequality (\ref{inq:3.1}) implies that there exists a neighborhood $V_p \subset U_p$ such that
	\[
		\overline{\partial} \omega_{t}  - \frac{\gamma}{1 - \gamma}(\omega_{t} \wedge \overline{\omega}_{t}) > 0 \quad \text{ on } \left< L_1(q), \cdots, L_m(q) \right>
	\]
	for all $q \in V_p$ with $\pi(q)=t$.
	Since $\Null_q \subset \left< L_1(q), \cdots, L_m(q) \right> $,
	\[
		\overline{\partial} \omega_{t}  - \frac{\gamma}{1 - \gamma}(\omega_{t} \wedge \overline{\omega}_{t}) > 0 \quad \text{ on } \Null_q
	\]
	for all $q \in V_p$ with $\pi(q)=t$. Therefore, since the set $\{ p \in \partial \O_0 : \dim_{\CC}(\Null_p) > 0 \}$ is compact, we conclude that
	$$\liminf_{t \rightarrow 0} DF(\O_t) \ge DF(\O_0),$$ 
	by applying Theorem \ref{thm:DF,S formulas} again. 
	$\limsup_{t \rightarrow 0} S(\O_t) \le S(\O_0)$ follows from the same argument as above. 
\end{proof}


\section{\bf Example}

In 1977, Diederich and Forn{\ae}ss (\cite{diederich-fornaess2}) constructed a 1-parameter family of bounded, pseudoconvex domains $\O_{\beta}$ $(\beta > \frac{\pi}{2})$, called worm domains, in $\CC^2$ with $C^{\infty}$-smooth boundaries. They showed that the Diederich-Forn{\ae}ss indices of worm domains are non-trivial, i.e., $0 < DF(\O_{\beta}) < 1$, for all $\beta$, and $\O_{\beta}$ does not admit a Stein neighborhood basis for some $\beta$. 
Recently, Liu (\cite{Liu1}) calculated the exact value of the $DF(\O_{\beta})$. Also, the second named author (\cite{Yum1}) calculated the exact value of the $S(\O_{\beta})$ and found the following relation between two indices for worm domains:
\[
	\frac{1}{DF(\O_{\beta})} + \frac{1}{S(\O_{\beta})} = 2,
\]
whenever $DF(\O_{\beta}) > 0$ and $S(\O_{\beta}) < \infty$.

In this section, we give an explicit example, by modifying worm domains, which shows the Diederich-Forn{\ae}ss and Steinness indices do not admit upper semi-continuity and lower semi-continuity in general, respectively. 
We first recall the $\beta$-worm domain.

\begin{defn} \label{worm defn}
	The {\it $\beta$-worm domain} $D_{\beta}$ $(\beta > \frac{\pi}{2})$ is defined by
	$$ D_{\beta} := \left\lbrace (z,w)\in \CC^2 : \left| z - e^{i \log|w|^2} \right|^2 - (1 - \phi_{\beta}(\log|w|^2) ) < 0  \right\rbrace $$
	where $\phi_{\beta} : \RR \rightarrow \RR$ is a fixed smooth function with the following properties : 
\begin{itemize}
\item[(\romannumeral1)] $\phi_{\beta}(x) \ge 0$, $\phi_{\beta}$ is even and convex.
\item[(\romannumeral2)] $\phi^{-1}_{\beta}(0) = I_{\beta - \frac{\pi}{2}} = [-(\beta - \frac{\pi}{2}), \beta - \frac{\pi}{2} ].$
\item[(\romannumeral3)] $\exists$ $a>0$ such that $\phi_{\beta}(x)>1$ if $x<-a$ or $x>a$.
\item[(\romannumeral4)] $\phi'_{\beta}(x) \neq 0$ if $\phi_{\beta}(x) = 1$.
\end{itemize}
\end{defn}

Let $\O$ be a domain in $\CC^3$ defined by the defining function
\[
	\rho(z,w,\gamma) :=  \left| z - e^{i \log|w|^2} \right|^2 - \left(1 - \phi_{\frac{3}{4}\pi}(\log|w|^2) - |\gamma|^2 \right),
\]
i.e., $\O := \{ (z,w,\gamma) \in \CC^3 : \rho(z,w,\gamma)  < 0 \}$, and
$\O_{\gamma} := \{ (z,w) \in \CC^2 : \rho_{\gamma}(z,w) := \rho(z,w,\gamma)  < 0 \}$ for each $\gamma \in \D$.
Here, the choice of $\beta = \frac{3}{4}\pi$ is not important in our example.

Now, for the following argument, we refer readers to \cite{diederich-fornaess2} and \cite{Krantz-Peloso}. 
Let 
\[
	\widetilde{\rho}_{\gamma}(z,w) := \rho_{\gamma}(z,w) e^{2 \arg w},
\]
which is a local defining function near any boundary point $p \in \partial \O_{\gamma}$.
Note that the holomorphic tangent plain $T^{1,0}_p(\partial \O_{\gamma})$ is spanned by a vector 
\[
	L_p := -\frac{\partial \rho_{\gamma}}{\partial w}(p) \left.\frac{\partial}{\partial z}\right|_p 
	+ \frac{\partial \rho_{\gamma}}{\partial z}(p) \left.\frac{\partial}{\partial w}\right|_p.
\]
Then
\begin{align*}
	\Levi_{\widetilde{\rho}_{\gamma}}(L_p, L_p) = \frac{e^{2\arg w}}{4|w|^2} 
	&\left[     
	\left| i\overline{z}(z - |w|^{2i}) + \phi'_{\frac{3}{4}\pi}(\log|w|^2) \right|^2  \right. \\
	& \left. +  \left|z - |w|^{2i} \right|^2 \left( \phi_{\frac{3}{4}\pi}(\log|w|^2) + \phi''_{\frac{3}{4}\pi}(\log|w|^2) \right) 
	\right].
\end{align*}
Observe that $\left| z - |w|^{2i} \right|$ never vanish on $\partial \O_{\gamma}$.
If $\log |w|^2 \notin [-(\beta - \frac{\pi}{2}), \beta - \frac{\pi}{2} ]$, then $\phi_{\frac{3}{4}\pi}(\log|w|^2) + \phi''_{\frac{3}{4}\pi}(\log|w|^2) > 0$ by the definition of $\phi_{\frac{3}{4}\pi}$. 
If $\log |w|^2 \in [-(\beta - \frac{\pi}{2}), \beta - \frac{\pi}{2} ]$, then $\left| i\overline{z}(z - |w|^{2i}) \right| > 0$ provided that $z \neq 0$.
Therefore, if $\gamma \neq 0$ then $z(p) \neq 0$, hence, $\Levi_{\widetilde{\rho}_{\gamma}}(L, L)(p) > 0$ for all $p \in \partial \O_{\gamma}$.
This means that $\O_{\gamma}$ is strongly pseudoconvex for all $\gamma \neq 0 \in \D$.
Since every strongly pseudoconvex domain admits a strictly plurisubharmonic defining function, $DF(\O_{\gamma}) = 1$ and $S(\O_{\gamma}) = 1$ for all $\gamma \neq 0 \in \D$.
Moreover, $\O_{0} = D_{\frac{3}{4}\pi}$ implies that $DF(\O_{0}) = \frac{2}{3}$ and $S(\O_{0})=2$ from the results in \cite{Liu1} and  \cite{Yum1}.
We conclude that $DF(\O_{\gamma})$ and $S(\O_{\gamma})$ are not upper semi-continuous and lower semi-continuous at $\gamma = 0$, respectively.


\begin{bibdiv}
\begin{biblist}

\bib{Adachi-Yum}{article}{
	author={Adachi, Masanori},
	author={Yum, Jihun},
	title={Diederich--Forn{\ae}ss and Steinness indices for abstract CR manifolds},
	journal={},
	volume={},
	date={2020},
	number={},
	pages={},
	status={Preprint},
	eprint={arXiv:2003.01330},
}

\bib{Boas-Straube}{article}{
	author={Boas, Harold P.},
	author={Straube, Emil J.},
	title={de Rham cohomology of manifolds containing the points of infinite
		type, and Sobolev estimates for the $\overline\partial$-Neumann problem},
	journal={J. Geom. Anal.},
	volume={3},
	date={1993},
	number={3},
	pages={225--235},
}

\bib{D'Angelo0}{article}{
	author={D'Angelo, John P.},
	title={Finite type conditions for real hypersurfaces},
	journal={J. Differential Geometry},
	volume={14},
	date={1979},
	number={1},
	pages={59--66 (1980)},
}
\bib{D'Angelo}{article}{
	author={D'Angelo, John P.},
	title={Iterated commutators and derivatives of the Levi form},
	conference={
		title={Complex analysis},
		address={University Park, PA.},
		date={1986},
	},
	book={
		series={Lecture Notes in Math.},
		volume={1268},
		publisher={Springer, Berlin},
	},
	date={1987},
	pages={103--110},
}

\bib{diederich-fornaess}{article}{
	author={Diederich, Klas},
	author={Forn{\ae}ss, John Erik},
	title={Pseudoconvex domains: bounded strictly plurisubharmonic exhaustion
		functions},
	journal={Invent. Math.},
	volume={39},
	date={1977},
	number={2},
	pages={129--141},
}

\bib{diederich-fornaess2}{article}{
	author={Diederich, Klas},
	author={Forn{\ae}ss, John Erik},
	title={Pseudoconvex Domains: an example with nontrivial neighborhood},
	journal={Math. Ann.},
	volume={225},
	date={1977},
	number={3},
	pages={275--292},
}

\bib{Krantz-Peloso}{article}{
	author={Krantz, Steven G.},
	author={Peloso, Marco M.},
	title={Analysis and geometry on worm domains},
	journal={J. Geom. Anal.},
	volume={18},
	date={2008},
	number={2},
	pages={478--510},
}

\bib{Liu1}{article}{
	author={Liu, Bingyuan},
	title={The Diederich-Forn\ae ss index I: For domains of non-trivial index},
	journal={Adv. Math.},
	volume={353},
	date={2019},
	pages={776--801},
}

\bib{Saeki}{book}{
	author={Saeki, Osamu},
	title={Topology of Singular Fibers of Differentiable Maps},
	series={Lecture notes in Mathematics},
	publisher={Springer},
	date={2004}
}

\bib{Yum1}{article}{
   author={Yum, Jihun},
   title={On the Steinness Index},
   journal={J. Geom. Anal.},
   volume={29},
   date={2019},
   number={2},
   pages={1583--1607},
}

\bib{Yum2}{article}{
	author={Yum, Jihun},
	title={CR-invariance of the Steinness index},
	status={Preprint},
	eprint={arXiv:1908.01214},
	date={2019}
}

\end{biblist}
\end{bibdiv}

\end{document}